\let\@@citation@@=\citation
\renewcommand{\citation}[1]{\@@citation@@{#1}%
\@for\@tempa:=#1\do{\@ifundefined{cit@\@tempa}%
  {\global\@namedef{cit@\@tempa}{}}{}}%
}
\def\@lbibitem[#1]#2#3\par{%
  \@ifundefined{cit@#2}{}{\@skiphyperreftrue
  \H@item[%
    \ifx\Hy@raisedlink\@empty
      \hyper@anchorstart{cite.#2\@extra@b@citeb}%
        \@BIBLABEL{#1}%
      \hyper@anchorend
    \else
      \Hy@raisedlink{%
        \hyper@anchorstart{cite.#2\@extra@b@citeb}\hyper@anchorend
      }%
      \@BIBLABEL{#1}%
    \fi
    \hfill
  ]%
  \@skiphyperreffalse}%
  \if@filesw
    \begingroup
      \let\protect\noexpand
      \immediate\write\@auxout{%
        \string\bibcite{#2}{#1}%
      }%
    \endgroup
  \fi
  \ignorespaces
  \@ifundefined{cit@#2}{}{#3}}
\def\@bibitem#1#2\par{%
  \@ifundefined{cit@#1}{}{\@skiphyperreftrue\H@item\@skiphyperreffalse
  \Hy@raisedlink{%
    \hyper@anchorstart{cite.#1\@extra@b@citeb}\relax\hyper@anchorend
    }}%
  \if@filesw
    \begingroup
      \let\protect\noexpand
      \immediate\write\@auxout{%
        \string\bibcite{#1}{\the\value{\@listctr}}%
      }%
    \endgroup
  \fi
  \ignorespaces
  \@ifundefined{cit@#1}{}{#2}}
\newcommand{\linkhere}[2]{%
	\phantomsection
	#1\def\@currentlabel{\unexpanded{#1}}\label{#2}%
}
\newtheorem{thm}{Theorem}
\newtheorem{cor}[thm]{Corollary}
\newtheorem{lem}[thm]{Lemma}
\newtheorem{conj}[thm]{Conjecture}
\newtheorem{obs}[thm]{Observation}
\theoremstyle{definition}
\def\F{\mbox{\ensuremath{\mathcal F}}\xspace}
\def\HH{\mbox{\ensuremath{\mathcal H}}\xspace}
\DeclareMathSymbol{\lsb@l}{\mathalpha}{letters}{`l}
\begin{document}

\title{Note on polychromatic coloring of hereditary hypergraph families}
\author{D\"om\"ot\"or P\'alv\"olgyi
\footnote{ELTE E\"otv\"os Lor\'and University and Alfr\'ed R\'enyi Institute of Mathematics, Budapest, Hungary.\\
Supported by the ERC Advanced Grant ``ERMiD'' and by the J\'anos Bolyai Research Scholarship of the Hungarian Academy of Sciences, and by the New National Excellence Program \'UNKP-22-5 and by the Thematic Excellence Program TKP2021-NKTA-62 of the National Research, Development and Innovation Office.}}


\maketitle

\begin{abstract}
We exhibit a 5-uniform hypergraph that has no polychromatic 3-coloring, but all its restricted subhypergraphs with edges of size at least 3 are 2-colorable.
This disproves a bold conjecture of Keszegh and the author, and can be considered as the first step to understand polychromatic colorings of hereditary hypergraph families better since the seminal work of Berge.
We also show that our method cannot give hypergraphs of arbitrary high uniformity, and mention some connections to panchromatic colorings.
\end{abstract}

\medskip

A hypergraph $\HH=(V,E)$ is a collection of sets $E$ over a base set $V$.
The elements of $V$ are called the vertices of the hypergraph and the elements of $E$ the edges of the hypergraph.
A hypergraph is $m$-uniform if all of its edges have size $m$ and it is $m$-heavy if all of its edges have size at least $m$.

$\HH'=(V',E')$ is a subhypergraph of $\HH=(V,E)$ if $V'\subset V$ and $E'\subset E$.
For a subset of the vertices $X\subset V$, we define the trace of \HH on $X$ as the hypergraph $\HH[X]=(X,E\cap X)$, where $E\cap X=\{e\cap X\mid e\in E(\HH)\}$; we also call this the restriction of \HH to $X$.
The subhypergraph of a trace is called a restricted subhypergraph.
A family of hypergraphs \F is hereditary if it is closed for taking subhypergraphs and traces, i.e., all the restricted subhypergraphs of any $\HH\in\F$ are also in \F.
From any hypergraph \HH we can get a hereditary \F by taking the family of all restricted subhypergraphs of \HH.

A $k$-coloring of the vertices of a hypergraph \HH is polychromatic (aka.\ panchromatic) if every edge contains a vertex of each $k$ colors.\footnote{In the special case $k=2$ this coincides with the definition of proper 2-colorings. If a 2-coloring exists, we say that \HH is 2-colorable, or that \HH has Property B.}
If such a coloring exists, then \HH needs to be $k$-heavy (but this is not a sufficient condition).
Because of this, define $m_k(\F)$ to be the least $m$ (if exists) for which every $m$-heavy $\HH\in \F$ is polychromatic $k$-colorable; if there is no such $m$, then let $m_k(\F)=\infty$.
If every edge of every hypergraph of a family is smaller than $m$, then $m_k(\F)\le m$ by definition.
Note that by definition we always have $m_k(\F)\le m_{k+1}(\F)$.
Moreover, the following might also be true.

\begin{conj}\label{conj:mk}
	If $m_2(\F)<\infty$, then $m_k(\F)<\infty$ for every $k$ for any hereditary family $\F$.
\end{conj}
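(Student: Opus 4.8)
The plan is to prove the implication by induction on $k$, reducing the step from $k-1$ to $k$ to the existence of a suitable \emph{shallow transversal}. The base case $k=2$ is exactly the hypothesis $m_2(\F)=t<\infty$. For the inductive step, suppose $m_{k-1}(\F)=s<\infty$ and let $\HH=(V,E)$ be an $m$-heavy member of $\F$ with $m$ large (to be chosen in terms of $s,t$). I would look for a set $T\subseteq V$ that meets every edge but leaves at least $s$ vertices in each edge, i.e.\ $1\le|e\cap T|$ and $|e\setminus T|\ge s$ for all $e\in E$. Given such a $T$, colour $T$ with colour $k$; since the trace $\HH[V\setminus T]$ is $s$-heavy and lies in $\F$ by heredity, the inductive hypothesis yields a polychromatic $(k-1)$-colouring of $V\setminus T$. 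Combining the two, every edge meets colour $k$ through $T$ and meets colours $1,\dots,k-1$ through its nonempty trace $e\setminus T$, so the result is a polychromatic $k$-colouring of $\HH$. Thus everything hinges on producing $T$ once $m$ is large in terms of $s$ and $t$.

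The only tool available is the hypothesis that every $t$-heavy member of $\F$, and hence every sufficiently heavy trace, is properly $2$-colourable, so the natural way to build $T$ is by iterating $2$-colourings. A single proper $2$-colouring $V=A\sqcup B$ already makes both $A$ and $B$ transversals, so $A$ is a candidate for $T$; the difficulty is that a proper $2$-colouring gives no lower bound on $\min(|e\cap A|,|e\cap B|)$, so the complementary side need not be $s$-heavy. I would therefore try to upgrade qualitative $2$-colourability to a \emph{robust} version by recursing on the traces $\HH[A]$ and $\HH[B]$, building a binary tree of $2$-colourings of depth about $\log(m/s)$ and assembling $T$ from one chosen side at each level, the hope being that along every edge the accumulated ``removed'' side stays a transversal while the ``kept'' side never drops below $s$ vertices.

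The main obstacle is precisely this size control across the recursion: heredity lets us $2$-colour every heavy trace, but it does not a priori prevent the traces from becoming arbitrarily thin, so the heaviness $s$ needed for the inductive hypothesis can be destroyed. Concretely, the target reduces to a \emph{robust $2$-colouring lemma}: for $m$ large in terms of $s,t$, every $m$-heavy member of $\F$ admits a partition into a transversal and an $s$-fold transversal (equivalently, the shallow $T$ above exists). I expect proving this lemma to be the entire difficulty, and the $5$-uniform hypergraph of the abstract is exactly a warning here: in the case $k=3$, where $s=m_2\le 3$, it has no partition into a transversal and a $3$-fold transversal, since such a partition would combine with a $2$-colouring of the $3$-heavy remainder to give a polychromatic $3$-colouring, which does not exist. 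Hence $m=5$ is \emph{not} enough and no cheap version of the lemma with small heaviness can hold. Any successful argument must genuinely let the required heaviness grow with $k$, and finding a mechanism by which bounded $2$-colourability forces such a robust partition for large $m$ is the crux on which the conjecture stands or falls.
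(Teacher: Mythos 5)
There is a genuine gap, and in fact there could not fail to be one: the statement you are trying to prove is Conjecture~\ref{conj:mk}, which is \emph{open} --- the paper offers no proof of it, only notes that it is known when $m_2(\F)=2$ by Berge's theorem \cite{Ber72}, and its main result (Theorem~\ref{thm:main}) disproves the stronger linear form, Conjecture~\ref{conj:mklin}. Your argument is a reduction, not a proof: everything is made to rest on the ``robust $2$-colouring lemma'' asserting that every sufficiently heavy member of $\F$ can be partitioned into a transversal $T$ and an $s$-fold transversal $V\setminus T$, and you give no argument for that lemma --- you explicitly defer it as ``the crux.'' But this lemma is essentially equivalent in strength to the conjecture itself (given the lemma, your induction is routine; conversely, a polychromatic $k$-colouring immediately yields such a partition by taking $T$ to be one colour class), so the proposal relocates the difficulty rather than resolving it. Your binary-tree-of-$2$-colourings idea for constructing $T$ founders exactly where you say it does: heredity guarantees $2$-colourability of heavy traces, but nothing prevents one side of a $2$-colouring from meeting some edge in a single vertex, so the ``kept'' side's heaviness cannot be maintained through the recursion, and no quantitative control is available from $m_2(\F)<\infty$ alone.

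Two points worth adding. First, your shallow-transversal target is not a new invention: it is the ``shallow hitting set'' technique that drives several of the positive results cited in the paper \cite{KP14,PU23}, and it is known to work for special geometric and abstract families precisely because those families carry extra structure (e.g.\ ABA-freeness) that forces shallow hitting sets to exist; for a general hereditary family with $m_2(\F)<\infty$ no such mechanism is known, which is why the conjecture is open. Second, your sanity check using the $5$-uniform hypergraph of Theorem~\ref{thm:main} is correct and is the right way to calibrate expectations: it shows the required heaviness in your lemma must grow at least superlinearly in $k\cdot m_2(\F)$ compared to what Conjecture~\ref{conj:mklin} predicted, so any successful version of your lemma must have quantitatively weaker (but still finite) bounds. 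As it stands, however, the proposal proves nothing beyond the (easy) implication ``shallow transversals exist $\Rightarrow$ Conjecture~\ref{conj:mk} holds.''
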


This beautiful conjecture is only known to hold if $m_2(\F)=2$; this is a classic result of Berge \cite{Ber72}, who proved that in this case $m_k(\F)=k$, and also characterized these families.

Conjecture \ref{conj:mk} first ``came up'' in 2009 during the writing of a survey \cite{survey}, until then it was simply believed to be true by the (few) people working on related questions for geometric families; for some of the latest results in this area, see \cite{anchored,BCsSz23,narmada,chekan,DP20,DP22,KP14,KPpropcol,PU23}, or the webpage \url{https://coge.elte.hu/cogezoo.html}, maintained by Keszegh and the author.

Later, Conjecture \ref{conj:mk} was popularized by the author at several venues, such as Oberwolfach meetings and MathOverflow, and other variants also emerged.
The strongest form is the following, conjectured by Keszegh and the author, inspired by geometric families for which the below inequality is sharp.

\begin{conj}\label{conj:mklin}
	$m_k(\F)\le (k-1)(m_2(\F)-1)+1$ for every $k$ for any hereditary family $\F$.
\end{conj}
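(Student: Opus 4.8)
The abstract already announces that this conjecture is \emph{false} (a $5$-uniform hypergraph with no polychromatic $3$-coloring but $2$-colorable dense traces), so I would aim to refute Conjecture~\ref{conj:mklin} rather than prove it, and the natural target is the smallest nontrivial case $k=3$. The plan is to produce a single $5$-uniform hypergraph $\HH$ and take $\F$ to be the hereditary family of all its restricted subhypergraphs. Suppose $\HH$ has no polychromatic $3$-coloring, yet every trace $\HH[X]$, after discarding edges that shrink below size $3$, is $2$-colorable. Then every $3$-heavy member of $\F$ is $2$-colorable (a restricted subhypergraph has only fewer edges than such a trace), so $m_2(\F)\le 3$; on the other hand $\HH$ itself is $5$-heavy and not $3$-colorable, so $m_3(\F)\ge 6$. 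Since $m_2(\F)=2$ would force $m_3(\F)=3$ by Berge's theorem, in fact $m_2(\F)=3$, and the claimed bound gives $(3-1)(3-1)+1=5<6\le m_3(\F)$, a contradiction. Thus the entire task reduces to building one hypergraph with the two stated properties.

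The two required properties pull in opposite directions, and reconciling them is, I expect, the main obstacle. Non-$3$-colorability is a \emph{global rigidity} condition: every assignment of three colors to $V$ must leave some edge missing a color, which wants many tightly interlocking edges. In contrast, $2$-colorability of every dense trace is a \emph{hereditary sparseness} condition: after deleting any vertex set and keeping only the edges of surviving size at least $3$, the remainder must still have Property~B, so the hypergraph cannot be too entangled on any subset. Finding a configuration rigid enough to kill all $3$-colorings while remaining loose enough that all of its dense traces stay $2$-colorable is the crux.

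Concretely, I would search for a highly symmetric candidate on a small vertex set---an orbit of edges under a transitive automorphism group, or an algebraic/design-theoretic family of edges---so that symmetry collapses the otherwise exponential space of colorings. First I would settle non-$3$-colorability: reduce the $3$-colorings to a few orbit representatives under the symmetry group and check that each leaves some edge with at most two colors, ideally isolating a counting or parity obstruction on the color-class sizes inside the edges. The harder half will be verifying $2$-colorability of \emph{all} dense traces, since the vertex subsets $X$ are exponentially many; here I would again exploit symmetry to cut the cases and try to prove a uniform structural statement (e.g.\ that every trace of edge-size $\ge 3$ has a small transversal or is essentially graph-like, hence satisfies Property~B) rather than inspecting traces individually.

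If a clean algebraic witness resists the trace verification, the fallback is an exhaustive computer search over small uniform hypergraphs carrying a prescribed symmetry: certify non-$3$-colorability by enumerating color orbit representatives, and certify $2$-colorability of the dense traces by exhibiting an explicit proper $2$-coloring for each trace. The principal risk in that route is that the smallest witness may be large enough to make trace enumeration costly, so keeping $|V|$ small and pruning aggressively via automorphisms would be essential; the abstract's remark that the method does not scale to arbitrarily high uniformity suggests such a small, somewhat ad hoc witness is indeed what one should expect.
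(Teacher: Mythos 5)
Your reduction is sound and is exactly the paper's framework: a $5$-uniform \HH with no polychromatic $3$-coloring, all of whose $3$-heavy restricted subhypergraphs are $2$-colorable, gives $m_2(\F)=3$ and $m_3(\F)\ge 6>5=(3-1)(3-1)+1$, refuting the conjecture (your use of Berge's theorem to rule out $m_2(\F)=2$ is a fine alternative to simply exhibiting a triangle trace). But the proposal stops exactly where the mathematical content begins: no hypergraph is ever exhibited, and ``search for a highly symmetric candidate'' is a plan, not a proof. The crux you correctly identify---reconciling global rigidity against $3$-colorings with hereditary looseness of all dense traces---is resolved in the paper only by an explicit witness: $V=\{1,\dots,8\}$ with $11$ edges, namely the complements (inside $\{1,\dots,8\}$) of the $7$ lines of the Fano plane on $\{1,\dots,7\}$, plus $4$ further $5$-edges chosen ad hoc so that for every $i\le 7$ some edge misses the pair $\{i,8\}$. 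Since every pair in the Fano plane lies on a line, \emph{every} pair of vertices is then missed by some edge, so every $6$-tuple of vertices contains an edge, giving $\alpha(\HH)\le 5<\frac{2}{3}\cdot 8$; Observation \ref{obs:alpha} (the hypergraph analogue of $\alpha\ge|V|/\chi$, proved by pigeonhole on the smallest color class) then kills all polychromatic $3$-colorings. Your instinct toward a design-theoretic, counting-based obstruction is right, but without the actual edge set nothing is proved.

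A second, smaller miscalibration: you hope to certify $2$-colorability of all dense traces ``by symmetry'' or by ``a uniform structural statement,'' and relegate exhaustive checking to a fallback. The paper's construction has no usable global symmetry (the $4$ extra edges were found by hand, and the remarks note that some $4$-tuples work and others do not), and the verification of the trace property is an honest case analysis on $|X|$: balanced colorings suffice automatically for $|X|\le 4$ and $|X|=8$; for $|X|=5,6$ one rules out a $K_5^{(3)}$ and a complete family of balanced bipartitions because there are only $11$ edges; for $|X|=7$ one finds a triple not missed by any edge---all double-checked by a SAT solver over the $2^8$ restrictions. So in this problem the brute-force verification is not a contingency but the method of record; any correct write-up must either carry out that case analysis or report the exhaustive check.
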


The goal of this note is to disprove Conjecture \ref{conj:mklin} in the following very weak form.

\begin{thm}\label{thm:main}
	There exists a $5$-uniform hypergraph \HH that has no polychromatic $3$-coloring, but all the $3$-heavy restricted subhypergraphs of \HH are $2$-colorable.
	Therefore, for the hereditary family \F of restricted subhypergraphs of \HH we have $m_3(\F)=6$ and $m_2(\F)=3$.
\end{thm}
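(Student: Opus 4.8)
The plan is to split the statement into two largely independent tasks: (i) produce an explicit $5$-uniform hypergraph \HH with the two asserted colorability properties, and (ii) read off $m_3(\F)=6$ and $m_2(\F)=3$ from those properties. Task (ii) is nearly immediate, so I would dispatch it first, using throughout the reformulation that a polychromatic $k$-coloring is exactly a partition $V=C_1\sqcup\cdots\sqcup C_k$ in which every class $C_i$ is a transversal, i.e.\ meets every edge. For $m_3$: since every edge of \HH, and hence of every member of \F, has size at most $5<6$, the definition gives $m_3(\F)\le 6$ directly, while \HH itself is a $5$-heavy member of \F with no polychromatic $3$-coloring, so $m_3(\F)\ge 6$. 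For $m_2$: the asserted $2$-colorability of all $3$-heavy restricted subhypergraphs is exactly $m_2(\F)\le 3$. For $m_2(\F)\ge 3$ I would argue that $m_2(\F)=2$ would force, by Berge's theorem \cite{Ber72}, $m_3(\F)=3\ne 6$; together with $m_2(\F)\ge 2$ (a single-vertex trace produces the non-$2$-colorable $1$-heavy edge $\{v\}$) this yields $m_2(\F)=3$. Alternatively one may simply point to three vertices of \HH whose pairwise traces realize a triangle $K_3$, a $2$-heavy non-$2$-colorable member of \F.

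The real content is therefore the construction, which I would seek as a small hypergraph carrying a large symmetry group, since symmetry is what makes both verifications tractable. The design is a tug-of-war between opposing constraints: there must be enough edges, interlocking tightly enough, that $V$ admits no partition into three transversals; yet the edges must be loose enough that every trace onto a subset $X$ can be $2$-colored once its short edges are discarded. I would start from a structured base set — vertices indexed by a small group or a combinatorial incidence structure — and take the edge set invariant under a transitive action, so that candidate colorings can be analyzed up to symmetry.

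To verify that \HH has no polychromatic $3$-coloring I would work entirely in the transversal language: assume $V=C_1\sqcup C_2\sqcup C_3$ with each $C_i$ a transversal and derive a contradiction, using the symmetry to fix a normal form for the classes and then checking the finitely many remaining cases. A counting or parity invariant on how a size-$5$ edge splits among the three classes (type $(3,1,1)$ or $(2,2,1)$) is the kind of obstruction I would hope to isolate, reducing the whole check to a short argument.

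The main obstacle is the second property: that \emph{every} $3$-heavy restricted subhypergraph is $2$-colorable. Since deleting edges preserves $2$-colorability, this reduces to showing that for every vertex subset $X$ one can $2$-color $X$ so that every original edge $e$ with $|e\cap X|\ge 3$ is bichromatic on $e\cap X$ — and this must hold simultaneously for all $X$. The difficulty is that tracing is not monotone for this problem: shrinking an edge both removes it as a constraint once $|e\cap X|\le 2$ and shrinks the room available to split the surviving edges, so a purely local or greedy argument can fail on the bad overlaps. My plan would be to exploit the symmetry of \HH to reduce to a bounded list of ``critical'' traces $X$ (those in which many edges retain size $\ge 3$), and for each either exhibit an explicit $2$-coloring or run a short discharging/parity argument; if \HH is small enough, this final step may instead be settled by exhaustive case analysis. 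I expect this uniform control over all traces to be where essentially all the work lies.
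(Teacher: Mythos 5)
Your reduction of the numerical claims to the two colorability properties is correct, and in fact slightly more complete than what the paper writes out: $m_3(\F)\le 6$ since every edge of every member of \F has size at most $5$, $m_3(\F)\ge 6$ since \HH itself is a $5$-heavy member of \F with no polychromatic $3$-coloring, $m_2(\F)\le 3$ is exactly the second property, and your lower bound $m_2(\F)\ge 3$ is fine either via Berge \cite{Ber72} (if $m_2(\F)=2$ then $m_3(\F)=3\neq 6$) or via a triangle trace --- indeed, in the paper's construction the trace on $\{1,2,3\}$ contains all three pairs $\{1,2\},\{1,3\},\{2,3\}$ as edges, so discarding the other edges gives a $2$-heavy, non-$2$-colorable member of \F.

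The genuine gap is that you never produce the hypergraph, and the existence of such an \HH is the entire content of the theorem. Your second through fourth paragraphs are a search heuristic, not an argument: ``I would start from a structured base set,'' ``I would hope to isolate'' an invariant, ``my plan would be'' to reduce to critical traces --- but there is no vertex set, no edge list, and hence nothing that can be verified. By contrast, the paper's proof is an explicit construction: $V=\{1,\dots,8\}$ with $11$ edges, namely the complements of the $7$ Fano lines plus $4$ additional edges chosen so that every pair of vertices is missed by some edge; this gives $\alpha(\HH)\le 5<\frac23\cdot 8$, which by the paper's Observation \ref{obs:alpha} (a pigeonhole argument close to the one you sketch) rules out a polychromatic $3$-coloring, and the $2$-colorability of all $3$-heavy restricted subhypergraphs is then a finite check, done by a case analysis on $|X|$ and confirmed by a SAT solver. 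Note also that your guiding design principle --- an edge set invariant under a transitive group action --- points away from the actual example: the paper's $4$ extra edges were found ad hoc, the construction has no such symmetry, and the paper's remarks show $8$ vertices and $11$ edges are minimal, so insisting on transitivity may well make the search fail. As it stands, the proposal establishes the implication ``construction $\Rightarrow$ theorem'' but not the theorem.
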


The proof is based on the following simple observation, which is the equivalent of the well-known $\frac{\alpha(G)}{|V(G)|}\le \frac1{\chi(G)}$ bound for graphs, stated below for hypergraphs and polychromatic colorings.

\begin{obs}\label{obs:alpha}
	If $\frac{\alpha(\HH)}{|V(\HH)|}<\frac{k-1}k$, then \HH has no polychromatic $k$-coloring.
\end{obs}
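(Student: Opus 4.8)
The plan is to prove the contrapositive: assuming that \HH admits a polychromatic $k$-coloring, I would show that $\frac{\alpha(\HH)}{|V(\HH)|}\ge\frac{k-1}{k}$, where $\alpha(\HH)$ is the independence number, i.e.\ the maximum size of a vertex set containing no edge of \HH. Negating this inequality gives exactly the hypothesis of the observation, so the contrapositive is equivalent to the claim.

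First I would fix a polychromatic $k$-coloring with color classes $V_1,\dots,V_k$. The single idea of the proof---mirroring the fact that a color class of a proper graph coloring is an independent set---is that the complement of \emph{any one} color class is independent in \HH. Indeed, polychromaticity says that every edge $e$ meets every color class, in particular $e\cap V_i\neq\emptyset$; hence no edge can lie inside $V(\HH)\setminus V_i$, which is precisely what it means for $V(\HH)\setminus V_i$ to be independent.

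It then remains to delete the right class. Since $|V_1|+\dots+|V_k|=|V(\HH)|$, by pigeonhole the smallest class, say $V_j$, satisfies $|V_j|\le\frac1k|V(\HH)|$. As $V(\HH)\setminus V_j$ is independent by the previous step,
\[
\alpha(\HH)\ \ge\ |V(\HH)\setminus V_j|\ =\ |V(\HH)|-|V_j|\ \ge\ \Big(1-\tfrac1k\Big)|V(\HH)|\ =\ \tfrac{k-1}{k}\,|V(\HH)|,
\]
as wanted.

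Honestly there is no real obstacle here: the statement is a one-line pigeonhole argument once the complement-of-a-color-class-is-independent remark is in place. The only points needing any care are fixing the meaning of $\alpha(\HH)$ for hypergraphs (the largest vertex set spanning no edge) and choosing the \emph{smallest} color class to delete, so that its complement---and hence the independent set we produce---is as large as possible; both are routine.
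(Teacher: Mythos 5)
Your proof is correct and is essentially identical to the paper's own argument: both use the pigeonhole principle to find a color class $S$ of size at most $\frac{|V(\HH)|}{k}$ and observe that polychromaticity forces $V(\HH)\setminus S$ to be independent, yielding $\alpha(\HH)\ge\frac{k-1}{k}|V(\HH)|$. The only cosmetic difference is that the paper writes the bound with explicit floors and ceilings, whereas you work directly with the fractional inequality, which is equally valid.
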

\begin{proof}
	If \HH has a polychromatic $k$-coloring, then by the pigeonhole principle one of the color classes, $S\subset V=V(\HH)$, has size at most $\lfloor\frac{|V|}k\rfloor$.
	Since every edge must intersect $S$ because of the polychromacity of the coloring, $V\setminus S$ is an independent set, so $\alpha(\HH)\ge |V|-\lfloor\frac{|V|}k\rfloor=\lceil\frac{k-1}k|V|\rceil\ge \frac{k-1}k|V|$.
\end{proof}	

\begin{proof}[Proof of Theorem \ref{thm:main}.]
	Let $V=\{1,\dots,8\}$,
	$E=\{\{4,5,6,7,8\},\{2,3,5,6,8\},\{2,3,4,7,8\},\{1,3,5,7,8\}$,\\	
	$\{1,3,4,6,8\},\{1,2,6,7,8\},\{1,2,4,5,8\},\{3,4,5,6,7\},\{1,2,4,5,7\},\{1,2,3,5,6\},\{2,3,4,6,7\}\}$.
	
	Note that the complements of the first 7 edges give the Fano plane over $\{1,\dots,7\}$.
	For every $1\le i\le 7$, the complement of one of the remaining 4 edges contains $\{i,8\}$, i.e., the edge will miss $\{i,8\}$.
	Thus, every pair of vertices is missed by at least one edge, i.e., there is an edge in any sextuple of vertices, so $\alpha\le 5<\frac23\cdot 8$.
	Using Observation \ref{obs:alpha}, \HH cannot have a polychromatic 3-coloring.
	In this special case, we could also argue that by the pigeonhole principle one of the color classes would consist of at most 2 vertices, and thus would be missed by an edge.
	
	I do not have an elegant proof for the fact that all $3$-heavy restricted subhypergraphs of this hypergraph are $2$-colorable.
	One way is to check this, is by hand.
	If we restrict to some $X$ with $|X|\le 4$, then coloring $\lfloor\frac{|X|}2\rfloor$ of the vertices blue and $\lceil\frac{|X|}2\rceil$ of the vertices red, will automatically intersect each edge of size at least 3.
	Such a coloring also works for $|X|=5$, unless the restricted subhypergraph contains a $K_5^{(3)}$, a complete 3-uniform hypergraph on 5 vertices.
	However, we only have 11 edges, and no 10 of them form a $K_5^{(3)}$.
	Similarly, for $|X|=6$ there is no 2-coloring only if we have $\frac{\binom63}2=10$ edges that cut $X$ into two equal parts in every possible way, which is not the case.
	For $|X|=7$, we can take one of the $\binom73=35$ triples that is not missed by any of the 11 edges, color them blue, and the other 4 vertices red.
	For $|X|=8$, coloring any 4--4 vertices blue and red works.
	
	I have also verified this with a SAT solver, as the 2-colorability of a hypergraph can be expressed very simply as a SAT formula, and there are only $2^8$ restrictions, most of them trivial.
	In fact, even instead of the SAT solver, a brute force search of 2-colorings would run fast enough.
\end{proof}

\noindent
\textbf{Remarks.}
I have found the 4 extra edges by ad hoc methods.
Some 4-tuples work, some others do not.
It is also possible that we can start with some completely different configuration instead of the Fano plane to obtain 11 edges such that each pair of vertices is missed by at least one edge. 
However, it is easy to see that a 5-uniform hypergraph on 8 vertices with 10 edges cannot be sufficient:
Each edge misses only 3 pairs, and there are $\binom82=28$ pairs to be missed, but because of the parity, every vertex is included in a pair that is missed at least twice, so at least $\frac{28+8/2}3$ 5-edges are required to miss every pair at least once.

It is also easy to prove that there can be no such hypergraph on 7 vertices.
From the fact that there is no polychromatic 3-coloring with 3 green, 2 blue and 2 red vertices, we can conclude that the graph of the pairs of vertices \emph{not} missed by the 5-edges cannot have 2 disjoint 2-edges, as otherwise we could color the vertices of one of these 2-edges blue, the vertices of the other 2-edge red.
In other words, the graph of non-missed pairs must form a star or a triangle.
In each case there need to be many 5-edges, and it is easy to find a $K_5^{(3)}$ restricted subhypergraph, which is 3-heavy and not 2-colorable.

This shows that our example is minimal in the sense that there are no examples on less vertices, or on the same number of vertices, but with less edges.
I could not rule out the possibility of an example with less edges and more vertices.
A strongly related problem is to determine the least number $p(m,k)$ for which there is an
$m$-uniform hypergraph with $p(m,k)$ edges that does not have a polychromatic $k$-coloring.
This problem has a rich history, under the name panchromatic coloring.\\ 

\textbf{Panchromatic colorings.}
Assume that there is an $m$ and $k$ for which $p((m-1)(k-1)+1,k)<p(m,2)$, and consider an $((m-1)(k-1)+1)$-uniform hypergraph \HH with $p((m-1)(k-1)+1,k)$ edges that has no polychromatic $k$-coloring.
Then in the family \F of all restricted subhypergraphs of \HH, each hypergraph has less than $p(m,2)$ edges, and is thus 2-colorable.
That is, $m_2(\F)\le m$ and $m_k(\F)>(m_2(\F)-1)(k-1)+1$, giving a counterexample to Conjecture \ref{conj:mklin}.

However, I could not find any $k$ and $m$ for which $p((m-1)(k-1)+1,k)<p(m,2)$ would hold.
It is known that $p(M,k)$ is roughly $(k/(k-1))^M$ for large $M$, from which $p((m-1)(k-1)+1,k)\approx (k/(k-1))^{(m-1)(k-1)+1}\approx ((1+\frac1{k-1})^{k-1})^m$.
As $(1+\frac1{k-1})^{k-1}$ is monotone increasing, we cannot get a counterexample from this argument for large $m$, but I could not rule out the possibility for the inequality to hold for some small $m$.
For example, it is known that $p(3,2)=7$, 
and I could not find any references whether $p(5,3)\ge 7$ holds or not, though most likely it does.\\

\textbf{Stronger counterexamples.}
It is tempting to try to generalize the pigeonhole principle argument from the proof of Theorem \ref{thm:main} by taking a $t$-uniform hypergraph \HH on $n$ vertices that has no polychromatic 3-coloring because any $\lfloor\frac n3\rfloor$-tuple of vertices is missed by some edge of \HH.
But this should be done very carefully if we want to ensure that all 3-heavy restricted subhypergraphs are 2-colorable.
For example, for each such subhypergraph $\HH'$ on $n'$ vertices $\alpha(\HH')\ge \frac{n'}2$ needs to hold by Observation \ref{obs:alpha}.
The following statements imply that also the VC-dimension of such an \HH can be at most 4.

\begin{lem}\label{lem:VC}
	If $VC\text{--}dim(\HH)\ge kd-1$ for some integer $d$ and $\HH\in \F$, then $m_k(\F)>(k-1)d$.
\end{lem}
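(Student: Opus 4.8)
The plan is to turn the VC-dimension hypothesis into an explicit \emph{witness}: a restricted subhypergraph of \HH that is $(k-1)d$-heavy yet has no polychromatic $k$-coloring. This suffices because, by heredity and the upward-closed nature of ``every $m$-heavy member is $k$-colorable'' in $m$, the inequality $m_k(\F)>(k-1)d$ means precisely that $m=(k-1)d$ fails, i.e.\ that some $(k-1)d$-heavy hypergraph in \F admits no polychromatic $k$-coloring.

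First I would extract a shattered set of the right size. Writing $n=kd-1$, the hypothesis $VC\text{--}dim(\HH)\ge kd-1$ yields a shattered set $X\subseteq V(\HH)$; since any subset of a shattered set is again shattered, I may take $|X|=kd-1$. By definition of shattering, every subset of $X$ occurs as $e\cap X$ for some edge $e$ of \HH, so the trace $\HH[X]$ contains all $2^{kd-1}$ subsets of $X$ as edges. I then let $\HH'$ be the subhypergraph of $\HH[X]$ whose edges are exactly the $(k-1)d$-element subsets of $X$; all of these lie in $\HH[X]$ because $X$ is shattered. Thus $\HH'$ is a restricted subhypergraph of \HH, hence $\HH'\in\F$, and it is $(k-1)d$-uniform, in particular $(k-1)d$-heavy.

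Finally I would apply Observation \ref{obs:alpha} to $\HH'$. A vertex set of $X$ contains an edge of $\HH'$ exactly when it has at least $(k-1)d$ elements, so the largest independent set has $(k-1)d-1$ vertices, giving $\alpha(\HH')=(k-1)d-1$ while $|V(\HH')|=kd-1$. A single cross-multiplication then verifies
\[
\frac{(k-1)d-1}{kd-1}<\frac{k-1}{k},
\]
since $k\bigl((k-1)d-1\bigr)=k(k-1)d-k<k(k-1)d-(k-1)=(k-1)(kd-1)$. By Observation \ref{obs:alpha}, $\HH'$ has no polychromatic $k$-coloring, so $(k-1)d$ does not serve as a valid $m$ and therefore $m_k(\F)>(k-1)d$.

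The argument is short, and there is no serious obstacle; the only mildly technical point is the arithmetic check of the strict inequality, which is exactly the (sharp, off-by-one) reason the dimension threshold is set at $kd-1$ rather than $kd$. The real content is conceptual rather than computational: recognizing that a shattered set lets one embed the complete $(k-1)d$-uniform hypergraph on $kd-1$ vertices, which is precisely the extremal configuration ruled out of polychromatic $k$-colorability by the $\alpha/|V|$ bound.
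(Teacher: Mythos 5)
Your proof is correct and is essentially the paper's own argument: both extract a shattered set of size $kd-1$ and use the complete $((k-1)d)$-uniform hypergraph on it, $K_{kd-1}^{((k-1)d)}$, as the $((k-1)d)$-heavy restricted subhypergraph of \HH in \F that admits no polychromatic $k$-coloring. The only cosmetic difference is that you certify non-colorability via Observation \ref{obs:alpha} (computing $\alpha=(k-1)d-1$ against $|V|=kd-1$), while the paper pigeonholes directly---some color class has at most $d-1$ vertices of the clique and is therefore avoided by a $((k-1)d)$-edge---which is the same counting argument in a different wrapper.
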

\begin{proof}
	If \HH contains a $K_{kd-1}^{((k-1)d)}$, there is a color class that contains at most $d-1$ vertices of this clique, so it is avoided by a $((k-1)d)$-edge.	
\end{proof}

\begin{cor}
	$m_k(\F)> (k-1)\lfloor\frac{VC\text{--}dim(\HH)+1}k\rfloor$ for any $\HH\in \F$.\\
	In particular, $m_2(\F)> \lceil\frac{VC\text{--}dim(\HH)}2\rceil$.
\end{cor}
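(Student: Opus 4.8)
The plan is to obtain the Corollary as a direct consequence of Lemma~\ref{lem:VC} by choosing the integer parameter $d$ as large as the hypothesis permits. Write $V = VC\text{--}dim(\HH)$. The hypothesis $V \ge kd-1$ of Lemma~\ref{lem:VC} is equivalent to $kd \le V+1$, i.e.\ $d \le \frac{V+1}{k}$, so the largest integer $d$ for which the Lemma applies is
\[
d = \left\lfloor\frac{V+1}{k}\right\rfloor .
\]
First I would fix this extremal choice of $d$ and feed it into the Lemma.

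Assuming $d \ge 1$, Lemma~\ref{lem:VC} immediately gives $m_k(\F) > (k-1)d = (k-1)\lfloor\frac{V+1}{k}\rfloor$, which is exactly the claimed bound. Next I would dispose of the degenerate case $d=0$, which happens precisely when $V < k-1$: here the right-hand side of the asserted inequality is $0$, and since $m_k(\F)$ is a positive quantity by definition (any hypergraph with an edge of size smaller than $k$ fails to be polychromatic $k$-colorable, so $m_k(\F)\ge 1$), the inequality $m_k(\F) > 0$ holds automatically. Thus the Lemma is only invoked when it carries content, and no separate combinatorial argument is needed in the small-$V$ regime.

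Finally, for the stated special case $k=2$ I would substitute to get $m_2(\F) > \lfloor\frac{V+1}{2}\rfloor$ and then rewrite this floor as a ceiling, using the elementary identity $\lfloor\frac{V+1}{2}\rfloor = \lceil\frac{V}{2}\rceil$ for every nonnegative integer $V$; a one-line parity check confirms both sides equal $t$ when $V=2t$ and $t+1$ when $V=2t+1$. This yields $m_2(\F) > \lceil\frac{VC\text{--}dim(\HH)}{2}\rceil$, completing the Corollary. I do not expect any substantive obstacle here: once Lemma~\ref{lem:VC} is available, the entire content reduces to selecting the extremal integer $d$ and verifying the floor/ceiling bookkeeping, so the only thing requiring care is making the optimization of $d$ explicit and handling the trivial boundary case cleanly.
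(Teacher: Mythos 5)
Your proposal is correct and is essentially identical to the paper's proof, which likewise just sets $d=\lfloor\frac{VC\text{--}dim(\HH)+1}{k}\rfloor$ and applies Lemma~\ref{lem:VC}; your extra care about the degenerate case $d=0$ and the identity $\lfloor\frac{V+1}{2}\rfloor=\lceil\frac{V}{2}\rceil$ only makes explicit what the paper leaves implicit.
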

\begin{proof}
	Let $d=\lfloor\frac{VC\text{--}dim(\HH)+1}k\rfloor$ and apply Lemma \ref{lem:VC}.
\end{proof}

These restrictions rule out many natural constructions, but we believe that they should exist, and $m_2(\F)=3, m_3(\F)=\infty$ is possible.
However, we are not aware of any methods that could be applicable to show that each 3-heavy restricted subhypergraph of some \HH is 2-colorable.
Nevertheless, a clever construction might do the job.
Improving our bound $m_3(\F)=6$ a bit higher might not even require new ideas, but just a simple search through constructions similar to the one in the proof of Theorem \ref{thm:main}.

\bigskip

\textbf{Acknowledgment.}
I am eternally indebted to Balázs Keszegh for reading the first version of this manuscript, and for several useful comments.


\end{document}